\newtheorem{theorem}{Theorem}[section]
\newtheorem{lemma}[theorem]{Lemma}
\theoremstyle{definition}
\theoremstyle{remark}
\numberwithin{equation}{section}
\newcommand{\rb}{\right}
\newcommand{\lb}{\left}
\newcommand\floor[1]{\lfloor#1\rfloor}
\begin{document}

\title[On Eight Colour Partitions]{On Eight Colour Partitions}

\author{B. Hemanthkumar}
\address{Department of Mathematics, M. S. Ramaiah University of Applied Sciences, Peenya campus, Bengaluru-560 058, Karnataka, India}
\email{hemanthkumarb.30@gmail.com}
\author{ H. S. Sumanth Bharadwaj}
\address{Department of Mathematics, M. S. Ramaiah University of Applied Sciences, Peenya campus, Bengaluru-560 058, Karnataka, India}
\email{sumanthbharadwaj@gmail.com}

\subjclass[2010]{11P83; 05A17}

\date{}
          
\begin{abstract}
In this article, we study the arithmetic properties of the partition function $p_{8}(n)$, the number of $8$-colour partitions of $n$. We prove several Ramanujan type congruences modulo higher powers of 2 for the function $p_8(n)$ by finding explicit formulas for the generating functions.
\end{abstract}
\keywords{colour partitions; congruences}

\maketitle

\section{Introduction}\label{S1}
For any nonnegative integer $n$, let $p(n)$ denote the number of partitions of $n$. The generating function of $p(n)$ is given by 
\begin{equation*}
\sum\limits_{n=0}^{\infty} p(n) q^n =\frac{1}{f_1}.
\end{equation*}
Here and throughout this paper, we set 
\[f_k:=\prod\limits_{j=0}^{\infty}(1-q^{kj})\]
for any positive integer $k$.
By Euler's pentagonal number theorem \cite[pp. 10-12]{And}, we have
\begin{equation}
f_1=\sum_{k=-\infty}^\infty (-1)^k q^{\frac{k(3k-1)}{2}}.\label{E4}
\end{equation}
In 1919, Ramanujan \cite{R1919} obtained the generating function for $p(5n+4)$ as 
\begin{equation}
\sum\limits_{n=0}^{\infty} p(5n+4) q^n =5 \frac{f_5^5}{f_1^6}. \label{E5}
\end{equation}
It follows from \eqref{E5} that $p(5n+4)\equiv 0 \pmod{5}$.  Also, he conjectured that for any integer $k\geq 1,$
\[p(5^kn+\delta_k) \equiv 0 \pmod{5^k},\]
 where $\delta_k$ is the reciprocal modulo $5^k$ of 24, and outlined the proof in \cite[pp.\ 156-177]{R}, (also see \cite{BK}). In 1938, Watson \cite{W} proved the above conjecture and in 1981,  Hirschhorn and Hunt \cite{HH} gave a simple proof of the conjecture.

Recently, Hirschhorn \cite{H} studied the number of 3-colour partitions of $n$, denoted by $p_3(n)$ and is given by  
\begin{equation*}
\sum\limits_{n=0}^{\infty} p_3(n) q^n = \dfrac{1}{f_1^3}.
\end{equation*}
He obtained several congruences for $p_3(n)$ modulo high powers of $3$, namely, for all $\alpha, n\geq 0$,
\begin{equation*}
p_3\lb(3^{2\alpha+1}n+\frac{5\cdot3^{2\alpha+1}+1}{8}\rb) \equiv 0 \pmod{3^{2\alpha+2}},
\end{equation*}
which are analogous to Ramanujan's congruences for the partition function. 

In this paper we study the partition function,
 $p_8(n)$, the number of 8-colour partitions of $n$, which satisfies
 \begin{equation}\label{E001}
 \sum\limits_{n=0}^{\infty}p_8(n)q^n = \frac{1}{(q;q)_{\infty}^8}=\frac{1}{f_1^8}.
 \end{equation}
We obtain appropriate generating formulae for $p_8(n)$ and deduce several Ramanujan-type congruences modulo high powers of 2. The main results of this paper are  as follows.

\begin{theorem}\label{T4}
	For all nonnegative integers $n$ and $\alpha\geq 1$, we have
	\begin{align}
	p_8(2n+1) & \equiv 0 \pmod{2^3}, \label{E094} \\
	p_8\lb(2^{2\alpha}n + \frac{2^{2\alpha+1}+1}{3} \rb) & \equiv 0 \pmod{2^{3\alpha+3}}, \label{E095}\\
	p_8\lb(2^{2\alpha+1}n + \frac{7\cdot 2^{2\alpha-1}+1}{3} \rb) & \equiv 0 \pmod{2^{3\alpha+2}}, \label{E096}\\
	p_8\lb(2^{2\alpha+1}n + \frac{5\cdot 2^{2\alpha}+1}{3} \rb) & \equiv 0 \pmod{2^{3\alpha+8}}, \label{E097}\\
	p_8\lb(2^{2\alpha+2}n + \frac{13\cdot 2^{2\alpha-1}+1}{3} \rb) & \equiv 0 \pmod{2^{3\alpha+1}}, \label{E098}\\
	p_8\lb(2^{2\alpha+2}n + \frac{19\cdot 2^{2\alpha-1}+1}{3} \rb)& \equiv 0 \pmod{2^{3\alpha+3}}, \label{E099}\\
	p_8\lb(2^{2\alpha+2}n + \frac{11\cdot 2^{2\alpha}+1}{3} \rb) & \equiv 0 \pmod{2^{3\alpha+10}}, \label{E100}\\
	p_8\lb(2^{2\alpha+3}n + \frac{17\cdot 2^{2\alpha}+1}{3} \rb) & \equiv 0 \pmod{2^{3\alpha+9}} \label{E101}
	\end{align} and if $n$ is not a generalised pentagonal number, then
	\begin{equation}
	p_8\lb(2^{2\alpha+2}n + \frac{2^{2\alpha-1}+1}{3} \rb) \equiv 0 \pmod{2^{3\alpha+1}}. \label{E102}
	\end{equation}
\end{theorem}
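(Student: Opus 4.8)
The plan is to prove the whole of Theorem~\ref{T4} by iterating $2$-dissections of \eqref{E001} and carefully tracking the $2$-adic valuations of the coefficients that appear. Writing $\varphi(q)=\sum_{n\in\mathbb{Z}}q^{n^2}=f_2^5/(f_1^2f_4^2)$ and $\psi(q)=\sum_{n\ge0}q^{n(n+1)/2}=f_2^2/f_1$, the elementary identities $\psi(q)^2=\varphi(q)\psi(q^2)$ and $\varphi(q)^2=\varphi(q^2)^2+4q\psi(q^4)^2$ combine (via $1/f_1^4=\psi(q)^4/f_2^8$) to give
\begin{equation*}
\frac{1}{f_1^4}=\frac{f_4^{14}}{f_2^{14}f_8^4}+4q\,\frac{f_4^2f_8^4}{f_2^{10}},
\end{equation*}
and squaring this produces the $2$-dissection
\begin{equation*}
\frac{1}{f_1^8}=\frac{f_4^{28}}{f_2^{28}f_8^8}+8q\,\frac{f_4^{16}}{f_2^{24}}+16q^2\,\frac{f_4^4f_8^8}{f_2^{20}}.
\end{equation*}
Extracting the odd part gives $\sum_{n\ge0}p_8(2n+1)q^n=8\,f_2^{16}/f_1^{24}$, which is \eqref{E094}, while the even part gives an explicit eta-quotient for $\sum_{n\ge0}p_8(2n)q^n$; these are the base of the recursion.

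From here I would run an induction on $\alpha$. The essential preparatory work is to isolate a \emph{finite} list of eta-quotients — with $1/f_1^8$ and $f_2^{16}/f_1^{24}$ as the first members and a few more supplied by the next dissection levels — that is closed under the operation ``pass to the even (respectively odd) part, then replace $q^2$ by $q$'', up to prefactors that are explicit powers of $2$ times integer power series. Each member of the list is expanded through the $1/f_1^4$ identity above (raised to the relevant power), its even and odd parts are re-expressed in terms of members of the list, and the increase in $v_2$ is recorded. Since every modulus in \eqref{E095}--\eqref{E101} has the shape $2^{3\alpha+c}$, one expects a \emph{two-fold} dissection (the step $\alpha\mapsto\alpha+1$) to contribute a clean factor $2^3$; the induction hypothesis should then say that $\sum_n p_8(2^{e}n+r)q^n$ equals $2^{m}$ times a designated member of the list, and each of the stated congruences is read off as the $v_2$-value of the prefactor on a branch of the dissection tree that terminates, the remaining branches being the ones that feed the next step of the induction.

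Equation \eqref{E102} is handled by the same machinery but with a different terminal behaviour: the relevant branch should yield
\begin{equation*}
\sum_{n\ge0}p_8\!\left(2^{2\alpha+2}n+\frac{2^{2\alpha-1}+1}{3}\right)q^n\equiv 2^{3\alpha}\,f_1\pmod{2^{3\alpha+1}}
\end{equation*}
(the case $\alpha=1$, $n=0$ is the consistency check $p_8(1)=8=2^3$). By Euler's pentagonal number theorem \eqref{E4}, $f_1$ is supported precisely on the generalised pentagonal numbers, so the $q^n$-coefficient of the right-hand side vanishes modulo $2^{3\alpha+1}$ whenever $n$ is not a generalised pentagonal number, which is exactly \eqref{E102}.

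The step I expect to be the main obstacle is not performing the dissections but obtaining \emph{sharp} valuations. Raising $A+4qB$ to a power introduces coefficients $\binom{m}{k}4^k$, and the lower bounds for $v_2$ that these give are not tight enough for the large exponents $2^{11}$, $2^{13}$, $2^{12}$ in \eqref{E097}, \eqref{E100}, \eqref{E101}; recovering those requires carrying the dissection one or two levels further than the crude bound suggests and checking that a specific eta-quotient at a node is divisible by a higher power of $2$ than the binomial estimate predicts. Arranging the finite list so that every such extra divisibility is visible — and verifying that the list genuinely closes, rather than spawning new quotients at every level — is the delicate part, and this is where a computer-algebra check of the first several dissection levels would be used to fix the list and the recursion before the induction is formalised.
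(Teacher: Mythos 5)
Your overall strategy --- iterate the $2$-dissection of $1/f_1^8$ and track $2$-adic valuations by induction on $\alpha$ --- is the same one the paper uses, and your base case (the dissection of $1/f_1^8$ and the identity $\sum_n p_8(2n+1)q^n = 8f_2^{16}/f_1^{24}$, which gives \eqref{E094}) as well as your endgame for \eqref{E102} (reduce to $2^{3\alpha}f_1$ modulo $2^{3\alpha+1}$ and invoke the pentagonal number theorem) both match the paper. But the core of your plan rests on an assumption that is false as stated: there is no finite list of eta-quotients, containing $f_2^{16}/f_1^{24}$, that is closed under ``take the even (or odd) part and replace $q^2$ by $q$'' up to powers of $2$ times designated members. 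The even part of $q^j f_2^{24j}/f_1^{24j}$ is an integer linear combination of the quotients $q^{2k}f_4^{24k}/f_2^{24k}$ with $k$ ranging over roughly $j/2\le k\le 2j$ (this is the content of Lemmas \ref{L1} and \ref{L3} of the paper), so the family of quotients needed at level $\alpha$ grows like $4^{\alpha}$. An induction hypothesis of the form ``equals $2^m$ times one designated member of the list'' therefore cannot be maintained, and the computer-algebra check of the first several levels to which you defer the issue cannot certify closure for all $\alpha$; nor can a fixed-slack congruence version of the hypothesis survive repeated application of the extraction operator, since applying it to an error term divisible by $2^N$ gives no automatic gain in valuation while the moduli you must reach grow like $2^{3\alpha}$.

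The missing idea is to embrace the growing family rather than avoid it: write $\sum_n p_8(2^{2\alpha-1}n+\cdot)q^{n+1}$ \emph{exactly} as $f_2^{-8}\sum_j x_{2\alpha-1,j}\,q^j f_2^{24j}/f_1^{24j}$ (and the analogue at even levels, Theorem \ref{T1}), derive from the dissection the recursion satisfied by the coefficient array $x_{\alpha,j}$, and then prove the valuation estimates $\vartheta_2(x_{2j-1,k})\ge 3j+7(k-1)$ and $\vartheta_2(x_{2j,k})\ge 3(j+1)+8(k-1)$, with equality at $k=1$ (Lemmas \ref{L4} and \ref{L5}). The gain of $7$ or $8$ in valuation per unit increase of $k$ is what guarantees that, modulo each modulus $2^{3\alpha+c}$ occurring in the theorem (the largest being $c=10$), only the $k=1$ term survives; the exact values $\vartheta_2(x_{2\alpha-1,1})=3\alpha$ and $\vartheta_2(x_{2\alpha,1})=3\alpha+3$, combined with one further explicit dissection of $f_1^{4}$ or $f_1^{-4}$, then yield all of \eqref{E095}--\eqref{E102}, including the large exponents in \eqref{E097}, \eqref{E100} and \eqref{E101} that you flagged as problematic (these come from the explicit factor $2^5$ in the odd part of $(1/f_1^4)^8$ together with $\vartheta_2(x_{2\alpha,1})=3\alpha+3$, not from any hidden extra divisibility). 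Without a quantitative lemma of this type your induction does not close, and nothing beyond \eqref{E094} is actually established.
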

 
We provide some definitions and preliminary results in Section \ref{S2}. We establish generating functions for $p_8(n)$ in Section \ref{S3} and prove Theorem \ref{T4} in Section \ref{S4}.

\section{Preliminaries} \label{S2}
In this section, we present some preliminary results which are used in proving our main results.
 
\begin{lemma}\label{L0}
We have
\begin{equation}
f_1^4 = \frac{f_4^{10}}{f_2^2f_8^4} - 4q \frac{f_2^2f_8^4}{f_4^2} \label{E6}
\end{equation} and
\begin{equation}
\frac{1}{f_1^4} = \frac{f_4^{14}}{f_2^{14}f_8^4} + 4q \frac{f_4^2f_8^4}{f_2^{10}}. \label{E7}
\end{equation}
\end{lemma}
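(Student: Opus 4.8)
The plan is to recognise \eqref{E6} and \eqref{E7} as eta‑quotient restatements of standard $2$‑dissections of $\varphi(\pm q)^2$, and to obtain both by substitution into the classical theta‑function dictionary. Throughout write $\varphi(q)=\sum_{n\in\mathbb Z}q^{n^2}$ and $\psi(q)=\sum_{n\ge0}q^{n(n+1)/2}$, and recall the product representations $\varphi(-q)=f_1^2/f_2$, $\varphi(q)=f_2^5/(f_1^2f_4^2)$ and $\psi(q)=f_2^2/f_1$, so that in particular $\varphi(q^2)^2=f_4^{10}/(f_2^4f_8^4)$, $\psi(q^2)^2=f_4^4/f_2^2$ and $\psi(q^4)^2=f_8^4/f_4^2$. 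I will also use the two companion $2$‑dissections
\[
\varphi(q)^2=\varphi(q^2)^2+4q\,\psi(q^4)^2,\qquad \varphi(-q)^2=\varphi(q^2)^2-4q\,\psi(q^4)^2,
\]
together with $\psi(q)^2=\varphi(q)\psi(q^2)$; these are classical and follow from Gauss's dissection $\varphi(q)=\varphi(q^4)+2q\psi(q^8)$ together with the product formulas just listed.

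For \eqref{E6} I would start from $f_1^4=\bigl(f_1^2/f_2\bigr)^2f_2^2=\varphi(-q)^2f_2^2$, insert the second $2$‑dissection, and replace $\varphi(q^2)^2f_2^2$ and $\psi(q^4)^2f_2^2$ by their eta‑quotients; this collapses immediately to $f_1^4=f_4^{10}/(f_2^2f_8^4)-4q\,f_2^2f_8^4/f_4^2$.

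For \eqref{E7} I would instead begin from $1/f_1^4=\psi(q)^4/f_2^8=\varphi(q)^2\psi(q^2)^2/f_2^8$, substitute the first $2$‑dissection for $\varphi(q)^2$ and the value $\psi(q^2)^2=f_4^4/f_2^2$, and once more replace $\varphi(q^2)^2$ and $\psi(q^4)^2$ by their eta‑quotients; collecting the two resulting monomials yields $1/f_1^4=f_4^{14}/(f_2^{14}f_8^4)+4q\,f_4^2f_8^4/f_2^{10}$. (One can also derive \eqref{E7} from \eqref{E6} by squaring the $2$‑dissection $f_1^2=(\varphi(q^4)-2q\psi(q^8))f_2$ and comparing, so that a single identity really does the job.)

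Everything past the $2$‑dissections is pure bookkeeping, so the only genuine content is the dissection $\varphi(q)^2=\varphi(q^2)^2+4q\,\psi(q^4)^2$ itself, and this is the one step where care is needed. It is not a formal consequence of $\varphi(q)=\varphi(q^4)+2q\psi(q^8)$ alone: squaring reproduces the cross term $4q\,\psi(q^4)^2$ correctly via $\varphi(q^4)\psi(q^8)=\psi(q^4)^2$, but the leftover even part $\varphi(q^4)^2+4q^2\psi(q^8)^2$ is again just the same dissection one level down. It does follow from the classical fact that $r_2(2n)=r_2(n)$ for the number of representations of $n$ as a sum of two squares (or may simply be quoted from the literature), and with that granted \eqref{E6} and \eqref{E7} drop out by the substitutions above.
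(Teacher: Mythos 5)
Your proof is correct and takes essentially the same route as the paper: the paper disposes of Lemma~\ref{L0} by citing exactly the Ramanujan theta-function dissections of Berndt's Entry 25 (p.~40) that you invoke, namely $\varphi(\pm q)^2=\varphi(q^2)^2\pm 4q\,\psi(q^4)^2$, and the rest is the same eta-quotient bookkeeping you describe. Your extra care about the even part of $\varphi(q)^2$ (via $r_2(2n)=r_2(n)$) is sound but not something the paper bothers with, since it simply quotes these dissections from the literature.
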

\begin{proof}
Lemma \ref{L0} is an immediate consequence of dissection formulas of Ramanujan, collected in Berndt’s book \cite[Entry 25, p.\ 40]{B}. 
\end{proof}
\begin{lemma}\label{L1}
Let
\[S=\frac{qf_4^8}{f_1^8}, \,\, T=\frac{q^2f_4^{24}}{f_2^{24}}.\]
Then, for any $j\geq 1$
\begin{equation*}
S^j=T(S^{j-2}+16S^{j-1}). 
\end{equation*}
\end{lemma}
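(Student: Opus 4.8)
The plan is to reduce the whole family of identities to the single case $j=2$ and then obtain that case from Lemma~\ref{L0}. Writing the assertion as $S^{j}=TS^{j-2}+16\,TS^{j-1}$, one sees at once that once we know
\begin{equation*}
S^{2}=T+16\,TS,
\end{equation*}
the general statement follows by multiplying through by $S^{j-2}$; for $j=1$ this is read in the field $\mathbb{Q}((q))$ of formal Laurent series, where $S^{-1}=f_1^{8}/(q f_4^{8})$ is perfectly well defined. So everything comes down to the identity $S^{2}=T+16\,TS$, which, on inserting the definitions of $S$ and $T$, clearing the common factor $q^{2}f_4^{16}$ and clearing denominators, becomes the eta-quotient identity
\begin{equation*}
f_2^{24}=f_1^{16}f_4^{8}+16\,q\,f_1^{8}f_4^{16}.
\end{equation*}

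To prove this I would pass to the dilate $q\mapsto q^{2}$: since the map $\sum a_n q^{n}\mapsto\sum a_n q^{2n}$ is injective on power series, the displayed identity holds if and only if its image under $q\mapsto q^{2}$ holds, and that image (replacing $f_k$ by $f_{2k}$ and dividing by $f_2^{16}f_8^{8}$) reads
\begin{equation*}
\frac{f_4^{24}}{f_2^{16}f_8^{8}}=1+16\,q^{2}\,\frac{f_8^{8}}{f_2^{8}}.
\end{equation*}
This is exactly what the product of \eqref{E6} and \eqref{E7} delivers. The left-hand sides multiply to $f_1^{4}\cdot f_1^{-4}=1$. Expanding the product of the right-hand sides produces four terms; the two mixed terms are each equal to $4q\,f_4^{12}/f_2^{12}$ and cancel, while the remaining two terms are $f_4^{24}/(f_2^{16}f_8^{8})$ and $-16q^{2}f_8^{8}/f_2^{8}$. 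Equating the two sides yields the required identity, and tracing the reductions backwards proves the lemma.

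I do not anticipate a genuine obstacle: the lemma collapses to one algebraic identity among eta-quotients, and Lemma~\ref{L0} has evidently been recorded precisely so that multiplying its two halves produces that identity after the harmless dilation $q\mapsto q^{2}$. The only care needed is routine bookkeeping — keeping the exponents straight through the substitution and checking that the two cross terms in the product of \eqref{E6} and \eqref{E7} really do cancel (they do, both being $4q\,f_4^{12}/f_2^{12}$) — together with the trivial remark that the $j=1$ case of the lemma is nothing but $S^{2}=T+16\,TS$ rewritten after dividing by $S$.
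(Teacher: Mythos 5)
Your argument is correct. The reduction to the single identity $S^{2}=T(1+16S)$ and the passage to general $j\geq1$ by multiplying by $S^{j-2}$ in the Laurent series field is sound (the paper itself works freely with $1/S$), and your product computation checks out: the cross terms $\pm 4q\,f_4^{12}/f_2^{12}$ cancel, leaving $1=f_4^{24}/(f_2^{16}f_8^{8})-16q^{2}f_8^{8}/f_2^{8}$, which after undoing the dilation $q\mapsto q^{2}$ is exactly $f_2^{24}=f_1^{16}f_4^{8}+16qf_1^{8}f_4^{16}$, i.e.\ $\tfrac1S+16=\tfrac{f_2^{24}}{qf_4^{16}f_1^{8}}$, from which $\tfrac1S\left(\tfrac1S+16\right)=\tfrac1T$ follows.

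The route differs from the paper's in how that key eta-quotient identity is obtained. The paper uses only \eqref{E6}: it squares it to get \eqref{E1}, then applies $q\mapsto-q$ together with $\prod_{j\ge0}(1+q^{j})=f_2^{3}/(f_1f_4)$ to produce \eqref{E11}, and reads off $\tfrac1S+16=f_2^{24}/(qf_4^{16}f_1^{8})$ by comparing the two. You instead multiply \eqref{E6} by \eqref{E7} and exploit the cancellation of the mixed terms, at the price of the (harmless, injective) dilation $q\mapsto q^{2}$ needed to match levels. Your version has the mild advantage of avoiding the sign-change substitution and of actually using both halves of Lemma \ref{L0} (the paper only needs \eqref{E7} later, in the proof of Theorem \ref{T4}); the paper's version keeps everything at the original level of $q$ and produces the intermediate identities \eqref{E1} and \eqref{E12} explicitly, the latter being reused to compute $H(1/S)=-8$. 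Both are complete proofs of the lemma.
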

\begin{proof}
Squaring \eqref{E6} on both sides, we deduce that
\begin{equation}\label{E1}
f_1^8=\frac{f_4^{20}}{f_2^4f_8^8} + 16q^2 \frac{f_2^{4}f_8^8}{f_4^4}-8qf_4^8.
\end{equation}
Setting $-q$ for $q$ in \eqref{E1} and using the fact that $\prod\limits_{j=0}^{\infty}(1+q^{j})=\dfrac{f_2^3}{f_1f_4}$, we obtain
\begin{equation}\label{E11}
\frac{f_2^{24}}{f_1^8f_4^8}=\frac{f_4^{20}}{f_2^4f_8^8} + 16q^2 \frac{f_2^{4}f_8^8}{f_4^4}+8q f_4^{8}.
\end{equation}
We can rewrite \eqref{E1} as
\begin{equation}\label{E12}
\frac{1}{S}=\frac{f_4^{12}}{qf_2^4f_8^8} + 16q \frac{f_2^{4}f_8^8}{f_4^{12}}-8
\end{equation}
and then \eqref{E11} as
\begin{equation*}
\frac{1}{S}+16=\frac{f_2^{24}}{qf_4^{16}f_1^8}.
\end{equation*}
If follows from the definition of $S$ and last equation that
\begin{equation*}
\frac{1}{S}\left(\frac{1}{S}+16\right)=\frac{f_2^{24}}{q^2f_4^{24}}=\frac{1}{T},
\end{equation*}
which yields
\begin{equation*}
S=T\lb(\frac{1}{S}+16\rb),
\end{equation*}
thus for any $j \geq 1$,
\begin{equation*}
S^j = T\lb(S^{j-2}+16S^{j-1}\rb).
\end{equation*}
This completes the proof of Lemma \ref{L1}.
\end{proof}
Let us define an operator $H$ as follows:
\begin{equation*}
H\lb (\sum\limits_{n=0}^{\infty} a_nq^n\rb )=\sum\limits_{n=0}^{\infty} a_{2n}q^{2n} \label{E011}
\end{equation*}
and $H(1)=1.$

In view of \eqref{E12}, we see that
\begin{equation*}
H\lb(\frac{1}{S}\rb)=-8. 
\end{equation*} 
Also, from Lemma \ref{L1}
\begin{equation}
H\lb(S^j\rb)=T\lb(H\lb(S^{j-2}\rb)+16H\lb(S^{j-1}\rb)\rb ). \label{E029} 
\end{equation}
In particular,
\begin{align}
H(S) &= T\lb(H\lb(\dfrac{1}{S}\rb)+ 16 H(1) \rb) =8T, \label{E030}\\
H(S^2) &= T\lb(H(1)+16 H(S)\rb) =T+128T^2 \label{E031}
\end{align} and 
\begin{equation}
H(S^3) = T\lb(H(S)+16 H(S^2)\rb) =24T^2+2048T^3. \label{E034}
\end{equation}

We define an infinite matrix $M=(m_{j,\,k})_{j,\,k\ge1}$ as follows:
\begin{enumerate}
\item \label{MR1} $m_{1\,,1}=8,\ m_{1,\,k}=0$ for all $k \geq 2$.
\item \label{MR2} $m_{2,\,1}=1,\ m_{2,\,2}=128,\ m_{2,\,k}=0,$ for all $k \geq 3$.
\item \label{MR3} $m_{j,\,1}=0$ for all $j \geq 3$.
\item \label{MR4} $m_{j,\,k}=16m_{j-1,\,k-1}+m_{j-2,\,k-1}$ for all $j\geq3,\ k\geq 2$.
\end{enumerate}
\noindent Using the definition of $M$ and induction, it is easy to prove that 
\begin{enumerate}
\setcounter{enumi}{4}
\item \label{MR5} $m_{j\,,k}=0$ for all $k>j$.
\item \label{MR6} $m_{j,\,k}=0$ for all $j>2k$.
\item \label{MR7} $m_{2j,\,j}=1$ for all $j\geq 1$.
\item \label{MR8} $m_{j,\,j}=2^{4j-1}$ for all $j\geq 1$.
\end{enumerate}
We omit the details of the proof. The first eight rows of $M$ are given by
\begin{equation*}\label{E032}
\lb (\begin{matrix} 2^3&0&0&0&0&0&0&0&\cdots\\
1&2^7&0&0&0&0&0&0&\cdots\\
0&2^3\times 3&2^{11}&0&0&0&0&0&\cdots\\
0&1&2^9&2^{15}&0&0&0&0&\cdots\\
0&0&2^3\times 5&2^{11}\times 5&2^{19}&0&0&0&\cdots\\
0&0&1&2^7\times 3^2&2^{16}\times 3&2^{23}&0&0&\cdots\\
0&0&0&2^3\times 7&2^{12}\times 7&2^{19}\times 7&2^{27}&0&\cdots\\
0&0&0&1&2^{11}&2^{17}\times 5&2^{26}&2^{31}&\cdots
\end{matrix}\rb ).\\
\end{equation*}

\begin{lemma} \label{L3}
For any positive integer $j$, we have
\begin{equation}
H\lb(S^{j}\rb)=\sum\limits_{k=1}^{j} m_{j,\,k}T^k = \sum\limits_{k=\floor{\frac{j+1}{2}}}^{j} m_{j,\,k}T^k. \label{E039}
\end{equation}
\end{lemma}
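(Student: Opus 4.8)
The plan is to prove the first equality of \eqref{E039} by strong induction on $j$, feeding the recurrence \eqref{E029} into the defining recurrence~(\ref{MR4}) for the entries of $M$; the second equality will then follow at once from the vanishing property~(\ref{MR6}).

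\textbf{Base cases and the inductive step.} For $j=1$ and $j=2$, equations \eqref{E030} and \eqref{E031} give $H(S)=8T$ and $H(S^2)=T+128T^2$, which agree with $\sum_{k=1}^{1}m_{1,k}T^k$ and $\sum_{k=1}^{2}m_{2,k}T^k$ by rules~(\ref{MR1}) and~(\ref{MR2}). Now fix $j\ge3$ and assume the first equality of \eqref{E039} for $j-1$ and $j-2$. Using~(\ref{MR5}) to extend both sums harmlessly to all $k\ge1$, \eqref{E029} gives
\[
H\lb(S^{j}\rb)=T\lb(\sum_{k\ge1}m_{j-2,\,k}T^{k}+16\sum_{k\ge1}m_{j-1,\,k}T^{k}\rb)=\sum_{k\ge2}\lb(16m_{j-1,\,k-1}+m_{j-2,\,k-1}\rb)T^{k},
\]
after multiplying through by $T$ and re-indexing $k\mapsto k-1$. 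By~(\ref{MR4}) the coefficient of $T^{k}$ here is exactly $m_{j,k}$ for every $k\ge2$, while~(\ref{MR3}) gives $m_{j,1}=0$ since $j\ge3$, so the sum may be started at $k=1$; truncating it at $k=j$ via~(\ref{MR5}) yields $H(S^{j})=\sum_{k=1}^{j}m_{j,k}T^{k}$ and closes the induction.

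\textbf{Adjusting the lower limit.} By~(\ref{MR6}) we have $m_{j,k}=0$ whenever $j>2k$, that is, whenever $k\le\ceil{j/2}-1=\floor{(j-1)/2}$; hence all such terms drop out and the summation may be started at $\ceil{j/2}=\floor{(j+1)/2}$, giving the second form of \eqref{E039}. The elementary induction establishing (\ref{MR5})--(\ref{MR8}) is the one already announced (and omitted) in the text; only (\ref{MR5}) and (\ref{MR6}) are actually needed here.

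I do not anticipate a genuine obstacle: the whole argument is just a transcription of the matrix recurrence~(\ref{MR4}) into the functional recurrence \eqref{E029}. The only points deserving care are the re-indexing $k\mapsto k-1$, the matching of the two exceptional first rows of $M$ with the explicit values $8T$ and $T+128T^2$ coming from \eqref{E030}--\eqref{E031}, and the small arithmetic identities $\ceil{j/2}=\floor{(j+1)/2}$ and $\ceil{j/2}-1=\floor{(j-1)/2}$ used to rewrite the range of summation.
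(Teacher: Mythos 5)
Your proof is correct and follows essentially the same route as the paper: base cases $j=1,2$ from \eqref{E030}--\eqref{E031}, a strong induction driven by \eqref{E029} matched against the defining recurrence~(\ref{MR4}), and the vanishing property~(\ref{MR6}) to raise the lower summation limit. The only cosmetic difference is that you extend both sums to all $k\ge1$ before re-indexing, whereas the paper peels off the top term $16m_{j,j}T^{j+1}$ and cites~(\ref{MR8}) for it; both are valid.
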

\begin{proof}
The second equality follows from \eqref{MR6}. We use mathematical induction to prove the first equality. From \eqref{E030} and \eqref{E031}, we see that the above identity holds for $j=1,2$. Suppose that \eqref{E039} holds for some integer $j>2$. From \eqref{E029}, we see that 
\begin{align*}
H\lb(S^{j+1}\rb) & = T\lb( \sum\limits_{k=1}^{j-1} m_{j-1,\,k}T^k + 16 \sum\limits_{k=1}^{j} m_{j,\,k}T^k\rb)\\
& = 16m_{j,\,j} T^{j+1} + \sum\limits_{k=2}^{j}\lb(16 m_{j,\,k-1} + m_{j-1,\,k-1}\rb)T^{k}. \label{E040}
\end{align*}
Using \eqref{MR3},\eqref{MR4} and \eqref{MR8} in the above equation, we arrive at 
\begin{equation*}
H\lb(S^{j+1}\rb)=\sum\limits_{k=1}^{j+1} m_{j+1,\,k}T^k.\label{E041}
\end{equation*}
This completes the induction.
\end{proof}

\section{Generating functions} \label{S3}
In this section we obtain generating functions for the sequences in Theorem \ref{T4}.
\begin{theorem}\label{L2}
	We have
	\begin{align}
	\sum\limits_{n=0}^{\infty}p_8(2n+1)q^n &= 8\frac{f_2^{16}}{f_1^{24}}, \label{E020}\\
	\sum\limits_{n=0}^{\infty}p_8(4n+3)q^n &= 192\frac{f_2^{24}}{f_1^{32}} + 16384 q \frac{f_2^{48}}{f_1^{56}}.  \label{E063}
	\end{align} 
\end{theorem}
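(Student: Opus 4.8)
The plan is to extract the relevant arithmetic progressions from the generating function \eqref{E001} by repeated $2$-dissection, using Lemma \ref{L0} as the engine. First I would start from $\sum p_8(n)q^n = 1/f_1^8 = (1/f_1^4)^2$ and substitute \eqref{E7}. Squaring gives
\begin{equation*}
\frac{1}{f_1^8} = \frac{f_4^{28}}{f_2^{28}f_8^8} + 8q\frac{f_4^{16}}{f_2^{24}} + 16q^2\frac{f_4^4f_8^8}{f_2^{20}},
\end{equation*}
where the cross term simplifies because $f_4^{14}\cdot f_4^2 = f_4^{16}$ and $f_8^4\cdot f_8^4 = f_8^8$ cancel against the $f_8^4$ in the first factor's denominator — more precisely $\frac{f_4^{14}}{f_2^{14}f_8^4}\cdot 4q\frac{f_4^2f_8^4}{f_2^{10}} = 4q\frac{f_4^{16}}{f_2^{24}}$, doubled to $8q\frac{f_4^{16}}{f_2^{24}}$. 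The even part is supported on the first and third terms (which are power series in $q^2$) and the odd part is exactly the middle term. Replacing $q^2\mapsto q$ in the odd part yields $\sum p_8(2n+1)q^n = 8\,f_2^{16}/f_1^{24}$, which is \eqref{E020}. (Here one uses $f_{4}\mapsto f_2$, $f_2\mapsto f_1$ under $q^2\mapsto q$.)

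For \eqref{E063} I would iterate: take \eqref{E020} and perform another $2$-dissection on $f_2^{16}/f_1^{24}$. Writing $f_2^{16}/f_1^{24} = f_2^{16}\cdot (1/f_1^8)^3$, or more efficiently $f_2^{16}/f_1^{24} = (1/f_1^8)\cdot(1/f_1^8)\cdot(f_2^{16}/f_1^8)$, one again substitutes \eqref{E7} (or its square computed above) and collects terms. A cleaner route: from $1/f_1^8$ above, note $f_2^{16}/f_1^{24} = f_2^{16}\cdot(1/f_1^8)\cdot(1/f_1^{16})$, but the least painful is probably to use $1/f_1^{24} = (1/f_1^8)^3$ and the already-derived $2$-dissection of $1/f_1^8$, multiplied by the power series $f_2^{16}$ in $q^2$ (so it does not mix parities), then cube carefully tracking which monomials in $q$ are even versus odd. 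After extracting the terms of the form $q^{2n+1}$, dividing by $q$, and applying $q^2\mapsto q$, one should land on $192\,f_2^{24}/f_1^{32} + 16384\,q\,f_2^{48}/f_1^{56}$. I would double-check the constants $192 = 8\cdot 24$ and $16384 = 2^{14}$ against the trinomial expansion coefficients.

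The main obstacle will be bookkeeping in the second dissection: cubing the three-term expression for $1/f_1^8$ produces ten monomials, and one must correctly identify the parity of the $q$-power attached to each (tracking the explicit powers of $q$ together with the fact that $f_2, f_4, f_8$ are series in $q^2$), then regroup the surviving odd-part monomials into just two eta-quotients. The simplifications rely on telescoping identities among $f_2,f_4,f_8$ (e.g. recognizing that disparate-looking quotients coincide after clearing common factors); getting these groupings exactly right, so that the messy intermediate expression collapses to the clean two-term form \eqref{E063}, is where the care is needed. Everything else — the substitution of Lemma \ref{L0}, squaring, and the $q^2\mapsto q$ relabeling — is routine.
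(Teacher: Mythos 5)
Your derivation of \eqref{E020} is correct and is essentially the paper's own argument in different clothing: squaring \eqref{E7} and reading off the odd part is exactly the identity $H(S)=8T$ that the paper obtains from its $S$--$T$ recursion. For \eqref{E063}, however, there is a genuine gap in the step you dismiss as bookkeeping. Write the $2$-dissection \eqref{E7} as $1/f_1^4=a+b$ with $a=\frac{f_4^{14}}{f_2^{14}f_8^4}$ (even) and $b=4q\frac{f_4^2f_8^4}{f_2^{10}}$ (odd), so that your three terms for $1/f_1^8$ are $A+B+C=a^2+2ab+b^2$. The odd part of $(A+B+C)^3$ is $3B(A^2+C^2)+6ABC+B^3$. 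The pieces $6ABC$ and $B^3$ are harmless, since the $f_8$'s cancel monomial by monomial. But
\begin{equation*}
A^2+C^2=\frac{f_4^{56}}{f_2^{56}f_8^{16}}+256q^4\frac{f_4^8f_8^{16}}{f_2^{40}}
\end{equation*}
carries $f_8^{\mp16}$ in its two monomials, while the target \eqref{E063} is free of $f_8$; no clearing of common factors removes them, because only the \emph{sum} of these two monomials, not either one separately, equals an $f_8$-free eta-quotient. What you actually need is the identity $a^4+b^4=\frac{f_4^8}{f_2^{24}}+32q^2\frac{f_4^{32}}{f_2^{48}}$, and this is a nontrivial theta identity: it is precisely the content of the paper's Lemma \ref{L1} (equivalently, of $H(S^2)=T+128T^2$), which the paper proves by the $-q$ substitution argument. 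As stated, your plan cannot close without importing that lemma or an equivalent.

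The gap is fillable from the tools you already cite, but it requires an idea, not care. Multiplying \eqref{E6} by \eqref{E7} gives $1=(a+b)(A'-B')$ with $A'=\frac{f_4^{10}}{f_2^2f_8^4}$ and $B'=4q\frac{f_2^2f_8^4}{f_4^2}$; separating parities yields $bA'=aB'$ and then $A'(a^2-b^2)=a$, hence $a^2-b^2=\frac{f_4^4}{f_2^{12}}$, and finally $a^4+b^4=(a^2-b^2)^2+2(ab)^2$ gives the required identity. With that supplied, your expansion does collapse correctly: $8f_2^{16}\bigl(3B(A^2+C^2)+6ABC+B^3\bigr)=192q\frac{f_4^{24}}{f_2^{32}}+16384q^3\frac{f_4^{48}}{f_2^{56}}$, which after relabeling is \eqref{E063}. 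The paper avoids this issue by packaging the same identity once and for all as the recursion $S^j=T(S^{j-2}+16S^{j-1})$, which is also what powers the general Theorem \ref{T1}; your monomial-by-monomial cubing would become unmanageable at the next iteration, which is worth keeping in mind if you intend to push the method further.
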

\begin{proof}
	We note that from \eqref{E030},
	\begin{equation*}
	H\lb(q\frac{f_4^8}{f_1^8}\rb)=8q^2\frac{f_4^{24}}{f_2^{24}}
	\end{equation*}
	or
	\begin{equation*}
	H\lb(\frac{q}{f_1^8}\rb)=8q^2\frac{f_4^{16}}{f_2^{24}}.
	\end{equation*}
	This proves \eqref{E020}. Rewriting \eqref{E034}, we see that 
	\begin{equation*}
	H\lb(q^3\frac{f_4^{24}}{f_1^{24}}\rb)=24q^4\frac{f_4^{48}}{f_2^{48}}+2048q^6\frac{f_4^{72}}{f_2^{72}}
	\end{equation*}
	or
	\begin{equation}\label{E34}
	H\lb(q\frac{f_2^{16}}{f_1^{24}}\rb)=24q^2\frac{f_4^{24}}{f_2^{32}}+2048q^4\frac{f_4^{48}}{f_2^{56}}.
	\end{equation}
	Thus, \eqref{E063} follows from \eqref{E020} and \eqref{E34}.
\end{proof}
Let us define another infinite matrix $(x_{\alpha,\, i})_{\alpha, j\geq1}$ by
 \begin{enumerate}
\setcounter{enumi}{8}
\item \label{MR9} $x_{1,\,1}=8, \  x_{1,\,k}=0$ for all $k\geq2$.
\item \label{MR10} \[x_{\alpha+1, \, j} = \begin{cases}
\displaystyle \sum_{i=1}^{\infty} x_{\alpha,\,i} \, m_{3i,\,i+j} & \text{if}\, \, \, \alpha \ \text{is odd}, \vspace{.1cm} \\ 
\displaystyle \sum_{i=1}^{\infty} x_{\alpha,\,i} \, m_{3i+1,\,i+j} & \text{if}\, \, \, \alpha \ \text{is even},
 \end{cases}\] 
 \end{enumerate}
Using the properties of the numbers $m_{i,\,j}$ and induction, we can easily see that
\begin{enumerate}
\setcounter{enumi}{10}
\item \label{MR11} \[x_{\alpha, \, j} = 0 \,\,\,\,  \text{if} \,\ \begin{cases}
j>\frac{2^{\alpha+1}-2}{3}  & \text{and} \,\ \alpha \,\ \text{is even}, \vspace{.1cm}\\
j>\frac{2^{\alpha+1}-1}{3} & \text{and}\,\ \alpha \,\, \text{is odd};
\end{cases}\]
\end{enumerate}
and 
\begin{enumerate}
\setcounter{enumi}{11}
\item \label{MR12} 
\begin{align*}
x_{\alpha,\, \frac{2^{\alpha+1}-2}{3}}&=2^{8(2^\alpha-1)-5\alpha} \,\,\ \text{if} \,\ \alpha \,\ \text{is even},\\
x_{\alpha,\, \frac{2^{\alpha+1}-1}{3}}&=2^{8(2^\alpha-1)-5\alpha} \,\ \text{if} \,\,\ \alpha \,\ \text{is odd}.
\end{align*}
\end{enumerate}
The first four rows of $x$ are given by
\begin{equation}\label{31}
\left (\begin{matrix}
2^3&0&0&0&0&\cdots\\
2^6\times 3&2^{14}&0&0&0&\cdots\\
2^6\times 3&2^{15}\times 31&2^{21}\times 227&2^{33}\times 7 &2^{41}&\cdots\\
2^9\times 1993&2^{17}\times 729187&2^{31}\times 265617&2^{38}\times 3070947&\cdots&\cdots
\end{matrix}\right ).\\
\end{equation}

\begin{theorem}\label{T1}
	For any positive integer $\alpha$,
	\begin{equation}
		\sum\limits_{n=0}^{\infty} p_8\lb(2^{2\alpha-1}n + \frac{2^{2\alpha-1}+1}{3} \rb)q^{n+1}
		=\frac{1}{f_2^8}\sum_{j=1}^{\frac{1}{3}(4^{\alpha}-1)}x_{2\alpha-1,\,j}\, q^j \frac{f_2^{24j}}{f_1^{24j}}\label{E004}
	\end{equation}
	and 
	\begin{equation}
		\sum\limits_{n=0}^{\infty} p_8\lb(2^{2\alpha}n + \frac{2^{2\alpha+1}+1}{3} \rb)q^{n+1}
		=\frac{1}{f_1^8}\sum_{j=1}^{\frac{2}{3}(4^{\alpha}-1)}x_{2\alpha,\,j}\, q^j \frac{f_2^{24j}}{f_1^{24j}}.\label{E005}
	\end{equation}
\end{theorem}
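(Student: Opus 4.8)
The plan is to prove \eqref{E004} and \eqref{E005} simultaneously by induction on $\alpha$, running the coefficient extraction through the operator $H$ of Section~\ref{S2} and through its companion $\bar H\lb(\sum a_nq^n\rb)=\sum a_{2n+1}q^{2n+1}$ picking out the odd-index coefficients. The base case is immediate: multiplying \eqref{E020} by $q$ and noting $x_{1,1}=8$ and $\tfrac13(4^1-1)=1$ is exactly \eqref{E004} with $\alpha=1$. For the inductive machinery I would prove two implications, \eqref{E004}$_\alpha\Rightarrow$\eqref{E005}$_\alpha$ and \eqref{E005}$_\alpha\Rightarrow$\eqref{E004}$_{\alpha+1}$.

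For \eqref{E004}$_\alpha\Rightarrow$\eqref{E005}$_\alpha$, observe that in \eqref{E004} the terms with $n$ odd, $n=2m+1$, carry the coefficient $p_8\lb(2^{2\alpha-1}(2m+1)+\tfrac{2^{2\alpha-1}+1}{3}\rb)=p_8\lb(2^{2\alpha}m+\tfrac{2^{2\alpha+1}+1}{3}\rb)$ and sit at the even power $q^{2m+2}$, so applying $H$ and then $q^2\mapsto q$ turns the left side of \eqref{E004} into that of \eqref{E005}. On the right, $1/f_2^8$ and $T=q^2f_4^{24}/f_2^{24}$ are power series in $q^2$, hence $H$ commutes with multiplication by them and by $T^{-j}$; since $q^jf_2^{24j}/f_1^{24j}=S^{3j}/T^j$, Lemma~\ref{L3} gives
\[
H\lb(q^j\tfrac{f_2^{24j}}{f_1^{24j}}\rb)=T^{-j}\sum_k m_{3j,\,k}T^k=\sum_{\ell\ge1}m_{3j,\,j+\ell}\,T^\ell ,
\]
where \eqref{MR5}--\eqref{MR6} force $m_{3j,\,k}=0$ unless $j<k\le3j$, so only positive powers of $T$ survive. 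Writing $T^\ell=q^{2\ell}f_4^{24\ell}/f_2^{24\ell}$, summing on $j$, interchanging the two sums and using the odd-case of \eqref{MR10} to get $\sum_jx_{2\alpha-1,\,j}\,m_{3j,\,j+\ell}=x_{2\alpha,\,\ell}$, together with \eqref{MR11}, which terminates the $\ell$-sum at $\tfrac23(4^\alpha-1)$, the substitution $q^2\mapsto q$ produces \eqref{E005}$_\alpha$.

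For \eqref{E005}$_\alpha\Rightarrow$\eqref{E004}$_{\alpha+1}$ the residue $\tfrac{2^{2\alpha+1}+1}{3}$ stays fixed while the modulus doubles, so one needs the odd-index coefficients of \eqref{E005}: the term $n=2m$ carries $p_8\lb(2^{2\alpha+1}m+\tfrac{2^{2\alpha+1}+1}{3}\rb)$ at the odd power $q^{2m+1}$, and ``$\bar H$, times $q$, then $q^2\mapsto q$'' sends the left side of \eqref{E005} to that of \eqref{E004}$_{\alpha+1}$. On the right, since the factor $1/f_1^8$ is \emph{not} a series in $q^2$, I rewrite the $j$-th summand as
\[
q^j\frac{f_2^{24j}}{f_1^{24j+8}}=\frac1q\cdot\frac{f_2^{24j}}{q^{2j}f_4^{24j+8}}\cdot S^{3j+1},
\]
in which the middle factor is a Laurent series in $q^2$; then the odd part of the product equals that middle factor times $\tfrac1q H(S^{3j+1})=\tfrac1q\sum_k m_{3j+1,\,k}T^k$ by Lemma~\ref{L3}, which collapses, again by \eqref{MR5}--\eqref{MR6}, to $\sum_{\ell\ge1}m_{3j+1,\,j+\ell}\,q^{2\ell-1}f_4^{24\ell-8}/f_2^{24\ell}$. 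Multiplying by $q$, substituting $q^2\mapsto q$, summing on $j$ and invoking the even-case of \eqref{MR10} ($\sum_jx_{2\alpha,\,j}\,m_{3j+1,\,j+\ell}=x_{2\alpha+1,\,\ell}$) and \eqref{MR11} (the $\ell$-sum stops at $\tfrac13(4^{\alpha+1}-1)$) yields \eqref{E004}$_{\alpha+1}$.

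The routine points are that the residues $\tfrac{2^{2\alpha-1}+1}{3}$ are integers (since $2^{2\alpha-1}\equiv-1\pmod 3$), that the several $q^2\mapsto q$ passages produce exactly the exponents asserted, and the index bookkeeping coming from \eqref{MR5}--\eqref{MR6} and \eqref{MR11}. The point demanding the most care is the second implication: because $1/f_1^8$ is not a series in $q^2$ one cannot extract the odd part term by term, and the crux is the factorisation above, which concentrates all of the ``odd'' behaviour into the single explicit $1/q$ and leaves a series in $q^2$ multiplied by a power of $S$, to which Lemma~\ref{L3} applies.
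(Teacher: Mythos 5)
Your proof is correct and takes essentially the same route as the paper's: induction through the two implications \eqref{E004}$_\alpha\Rightarrow$\eqref{E005}$_\alpha$ and \eqref{E005}$_\alpha\Rightarrow$\eqref{E004}$_{\alpha+1}$, with each summand rewritten as an even (Laurent) series in $q^2$ times a power of $S$ so that Lemma \ref{L3} applies, followed by the index shift and the use of \eqref{MR5}, \eqref{MR6}, \eqref{MR10} and \eqref{MR11}. Your ``$\bar H$, times $q$'' in the second implication is literally the paper's ``multiply by $q$, then apply $H$'', so the only (harmless) difference is that you need the base case only for \eqref{E004} with $\alpha=1$.
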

\begin{proof}
We prove \eqref{E004} and \eqref{E005} by mathematical induction on $\alpha$. From \eqref{E020} and \eqref{E063}, it follows that \eqref{E004} and \eqref{E005} hold for $\alpha=1$.

Suppose that \eqref{E004} holds for some positive integer $\alpha>1$. 
Applying the operator $H$ to both sides gives
\begin{align}
\nonumber & \sum\limits_{n=0}^{\infty} p_8\lb(2^{2\alpha-1}(2n+1) + \frac{2^{2\alpha-1}+1}{3} \rb)q^{2n+2}\\
&=\frac{1}{f_2^8}\sum_{j=1}^{\frac{1}{3}(4^{\alpha}-1)}x_{2\alpha-1,j} H\lb(q^j \frac{f_2^{24j}}{f_1^{24j}}\rb). \label{E053}
\end{align}
From Lemma \ref{L3}, we have
\begin{align}
\nonumber H\lb(q^j \frac{f_2^{24j}}{f_1^{24j}}\rb)&=\frac{f_2^{24j}}{q^{2j}f_4^{24j}}H\lb(q^{3j} \frac{f_4^{24j}}{f_1^{24j}}\rb)\\
\nonumber &=\frac{f_2^{24j}}{q^{2j}f_4^{24j}} \sum_{k=\floor{\frac{3j+1}{2}}}^{3j} m_{3j, \,k} \, q^{2k} \frac{f_4^{24k}}{f_2^{24k}}\\
&=\sum_{k=\floor{\frac{j+1}{2}}}^{2j}m_{3j,\,k+j} \,q^{2k}\frac{f_4^{24k}}{f_2^{24k}}.\label{E2}
\end{align}
Combining \eqref{E053} and \eqref{E2}, we obtain
\begin{align*}
&\sum\limits_{n=0}^{\infty} p_8\lb(2^{2\alpha-1}(2n+1) + \frac{2^{2\alpha-1}+1}{3} \rb)q^{2n+2}\\
&=\frac{1}{f_2^8}\sum_{j=1}^{\frac{1}{3}(4^{\alpha}-1)}\sum_{k=\floor{\frac{j+1}{2}}}^{2j}x_{2\alpha-1,\,j} \, m_{3j,\,k+j} \,q^{2k}\frac{f_4^{24k}}{f_2^{24k}}.
\end{align*}
Interchanging the order of summantion and using properties \eqref{MR5}, \eqref{MR6} and \eqref{MR11} to extend the sums to all positive integers, we have
\begin{align*}
&\sum\limits_{n=0}^{\infty} p_8\lb(2^{2\alpha}n + \frac{2^{2\alpha+1}+1}{3} \rb)q^{n+1}\\
\nonumber&=\frac{1}{f_1^8}\sum_{k=1}^{\infty}\lb(\sum_{j=1}^{\infty}x_{2\alpha-1,\,j} \, m_{3j,\,k+j}\rb) q^{2k}\frac{f_2^{24k}}{f_1^{24k}}\\
& = \frac{1}{f_1^8}\sum_{k=1}^{\frac{2}{3}(4^\alpha-1)}x_{2\alpha,\,k} \,q^{k}\frac{f_2^{24k}}{f_1^{24k}}
\end{align*}
which is \eqref{E005}. Here last equality follows from properties \eqref{MR10} and \eqref{MR11}.

Suppose \eqref{E005} holds for some positive integer $\alpha$. Multiplying the above equation by $q$ and applying the operator $H$ to the resulting expression, we find that
\begin{align}
\nonumber & \sum\limits_{n=0}^{\infty} p_8\lb(2^{2\alpha}(2n) + \frac{2^{2\alpha+1}+1}{3} \rb)q^{2n+2}\\
&=\sum_{j=1}^{\frac{2}{3}(4^{\alpha}-1)}x_{2\alpha,\,j} H\lb(q^{j+1} \frac{f_2^{24j}}{f_1^{24j+8}}\rb).\label{E055}
\end{align}
From Lemma \ref{L3}, we have
\begin{align}
\nonumber H\lb(q^{j+1} \frac{f_4^{24j+8}}{f_1^{24j+8}}\rb)&=\frac{f_2^{24j}}{q^{2j}f_4^{24j+8}}H\lb(q^{3j+1} \frac{f_4^{24j+8}}{f_1^{24j+8}}\rb)\\
\nonumber &=\frac{f_2^{24j}}{q^{2j}f_4^{24j+8}} \sum_{k=\floor{\frac{3j+2}{2}}}^{3j+1} m_{3j+1, \,k} \, q^{2k} \frac{f_4^{24k}}{f_2^{24k}}\\
&=\sum_{k=\floor{\frac{j+2}{2}}}^{2j+1}m_{3j+1,\,k+j} \,q^{2k}\frac{f_4^{24k-8}}{f_2^{24k}}.\label{E8}
\end{align}
Combine \eqref{E055} and \eqref{E8}, interchange the order of the summation and extend sums to all positive integers, we get
\begin{align*}
& \sum\limits_{n=0}^{\infty} p_8\lb(2^{2\alpha+1}n + \frac{2^{2\alpha+1}+1}{3} \rb)q^{n+1}\\
&=\frac{1}{f_2^8}\sum_{k=1}^{\infty}\lb(\sum_{j=1}^{\infty}x_{2\alpha,\,j} \, m_{3j+1,\,k+j}\rb) q^{k}\frac{f_2^{24k}}{f_1^{24k}}\\
&=\frac{1}{f_2^8}\sum_{k=1}^{\frac{1}{3}(4^{\alpha+1}-1)}x_{2\alpha+1,\,k}\, q^{k}\frac{f_2^{24k}}{f_1^{24k}}
\end{align*}
which is \eqref{E004} with $\alpha$ replaced by $\alpha+1$. This completes the proof of \eqref{E004} and \eqref{E005} by induction.
\end{proof}

\section{Congruences}\label{S4}
For a positive integer $n$, let $\vartheta_2(n)$ be the highest power of $2$ that divides $n$ and $\vartheta_2(0)=+\infty$ by convention.

In this section, we consider the powers of 2 that divide the numbers $m_{j,\,k}$ and $x_{j,\,k}$. Using this information we prove Theorem \ref{T4}.
\begin{lemma}\label{L4}
For any integers $j,k\geq 1$ and $k\leq j \leq 2k$, we have
\begin{equation}
\vartheta_2(m_{j,\,k}) \geq 4(2k-j)-1.  \label{E067}
\end{equation}
\end{lemma}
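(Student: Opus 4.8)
The plan is to prove \eqref{E067} by strong induction on $j$, the engine being the defining recurrence \eqref{MR4}, namely $m_{j,k}=16m_{j-1,k-1}+m_{j-2,k-1}$, supplemented by the vanishing rules \eqref{MR5} and \eqref{MR6} to kill degenerate summands at the two edges $j=k$ and $j=2k$. It is convenient to set $d:=2k-j$, so that on the admissible strip $k\le j\le 2k$ one has $0\le d\le k$ and the claim reads $\vartheta_2(m_{j,k})\ge 4d-1$; the point of this substitution is that the first summand of the recurrence carries the weight $2^4$ that exactly compensates for the way $d$ changes under the index shifts.

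First I would settle the base cases $j=1,2$ by reading off \eqref{MR1} and \eqref{MR2}: one has $\vartheta_2(m_{1,1})=3=4(2\cdot1-1)-1$, $\vartheta_2(m_{2,1})=0\ge-1$, and $\vartheta_2(m_{2,2})=7=4(2\cdot2-2)-1$, so \eqref{E067} holds, with equality on the diagonal. For the inductive step, fix $j\ge3$ and an admissible $k$; since $k\le j\le 2k$ together with $j\ge3$ force $k\ge2$, the recurrence applies. I would then bound the two summands. For $16m_{j-1,k-1}$: either $j=2k$, in which case $m_{j-1,k-1}=0$ by \eqref{MR6}; or $j<2k$, in which case $(j-1,k-1)$ is admissible with parameter $d-1$, so the induction hypothesis gives $\vartheta_2(m_{j-1,k-1})\ge 4(d-1)-1$ and hence $\vartheta_2(16m_{j-1,k-1})\ge 4+4(d-1)-1=4d-1$. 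For $m_{j-2,k-1}$: either $k=j$, in which case $m_{j-2,k-1}=0$ by \eqref{MR5}; or $k<j$, in which case $(j-2,k-1)$ is admissible with parameter exactly $d$, so the induction hypothesis gives $\vartheta_2(m_{j-2,k-1})\ge 4d-1$ directly. Since $\vartheta_2(a+b)\ge\min\{\vartheta_2(a),\vartheta_2(b)\}$, we conclude $\vartheta_2(m_{j,k})\ge 4d-1=4(2k-j)-1$, which closes the induction.

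The main thing to be careful about, and essentially the only obstacle, is verifying that the shifted pairs $(j-1,k-1)$ and $(j-2,k-1)$ land in the admissible strip $k'\le j'\le 2k'$ whenever the corresponding entry is nonzero, and that the cases $j=2k$ and $k=j$ (which cannot occur simultaneously for $j\ge1$) are precisely the ones where one summand drops out; once this is laid out the valuation count is mechanical. It is worth noting that the resulting bound is sharp exactly on the diagonal $j=k$, where \eqref{MR8} gives $m_{k,k}=2^{4k-1}$, and is vacuous on $j=2k$, where \eqref{MR7} gives $m_{2k,k}=1$; this matches the displayed $8\times 8$ block and indicates that no uniform improvement of \eqref{E067} can be extracted from this argument alone.
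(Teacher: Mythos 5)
Your proof is correct and is exactly the argument the paper leaves implicit: the paper's one-line proof ("follows from the definition and properties \eqref{MR5}--\eqref{MR8}") is precisely the induction on $j$ via the recurrence $m_{j,k}=16m_{j-1,k-1}+m_{j-2,k-1}$, with \eqref{MR5} and \eqref{MR6} killing the boundary summands, which you have written out in full. The index bookkeeping (admissibility of $(j-1,k-1)$ and $(j-2,k-1)$, the shift $d\mapsto d-1$ absorbed by the factor $16$) all checks out.
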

\begin{proof}
The proof easily follows from the definition of $m_{j,\,k}$ and properties \eqref{MR5}-\eqref{MR8}.
\end{proof}
\begin{lemma}\label{L5}
For all positive integers $j$ and $k$, we have
\begin{equation}\label{E068}
\vartheta_2(x_{2j-1,\,k}) \geq 3j+7(k-1) 
\end{equation} and
\begin{equation}\label{E069}
\vartheta_2(x_{2j,\,k}) \geq 3(j+1)+8(k-1). 
\end{equation}
In both cases, equality holds for $k=1$.
\end{lemma}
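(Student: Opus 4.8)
\emph{Proof proposal.} The plan is to prove \eqref{E068} and \eqref{E069} simultaneously by induction on $j$, using the recursion \eqref{MR10}, which builds each row of $(x_{\alpha,i})$ from the preceding row with weights drawn from $M$: from an odd row $2j-1$ one passes to the even row $2j$ using the entries $m_{3i,\,i+k}$, and from an even row $2j$ one passes to the odd row $2j+1$ using the entries $m_{3i+1,\,i+k}$. So the induction splits into two half-steps, ``\eqref{E068} for $j$ $\Rightarrow$ \eqref{E069} for $j$'' and ``\eqref{E069} for $j$ $\Rightarrow$ \eqref{E068} for $j+1$''; the base case is \eqref{E068} for $j=1$, which is immediate since $x_{1,1}=2^3$ and $x_{1,k}=0$ for $k\ge2$, and the claimed equality at $k=1$ will be carried along in both half-steps. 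In all the sums below only finitely many indices contribute, by \eqref{MR5}, \eqref{MR6}, \eqref{MR11}, and a vanishing entry contributes valuation $+\infty$, so I only need to track the indices on which the relevant $m$-weight is nonzero.

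\emph{From an odd row to the next even row.} Assuming \eqref{E068}, write $x_{2j,\,k}=\sum_{i\ge1}x_{2j-1,\,i}\,m_{3i,\,i+k}$. By \eqref{MR5} and \eqref{MR6}, $m_{3i,\,i+k}\ne0$ forces $k/2\le i\le 2k$, and on this range the hypotheses $i+k\le 3i\le 2(i+k)$ of Lemma~\ref{L4} hold, giving $\vartheta_2(m_{3i,\,i+k})\ge 4(2k-i)-1$. Combining this with $\vartheta_2(x_{2j-1,\,i})\ge 3j+7(i-1)$ yields, for each contributing $i$,
\[
\vartheta_2\bigl(x_{2j-1,\,i}\,m_{3i,\,i+k}\bigr)\ge 3j+7(i-1)+4(2k-i)-1=3(j+1)+8(k-1)+3(i-1),
\]
which is $\ge 3(j+1)+8(k-1)$ since $i\ge1$; this is \eqref{E069}. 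For equality at $k=1$ the contributing indices are $i\in\{1,2\}$: the $i=1$ term is $x_{2j-1,1}\,m_{3,2}$, and since $m_{3,2}=2^3\cdot3$ and $\vartheta_2(x_{2j-1,1})=3j$ by the inductive equality it has valuation exactly $3(j+1)$, whereas the $i=2$ term has valuation $\ge 3j+7$ because $m_{6,3}=1$ by \eqref{MR7}. Hence the minimum is uniquely attained and $\vartheta_2(x_{2j,1})=3(j+1)$.

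\emph{From an even row to the next odd row.} Assuming \eqref{E069}, write $x_{2j+1,\,k}=\sum_{i\ge1}x_{2j,\,i}\,m_{3i+1,\,i+k}$. Here $m_{3i+1,\,i+k}\ne0$ forces $(k-1)/2\le i\le 2k-1$, and on this range Lemma~\ref{L4} gives $\vartheta_2(m_{3i+1,\,i+k})\ge 4(2k-i-1)-1$. Together with $\vartheta_2(x_{2j,\,i})\ge 3(j+1)+8(i-1)$ this gives, for each contributing $i$,
\[
\vartheta_2\bigl(x_{2j,\,i}\,m_{3i+1,\,i+k}\bigr)\ge 3(j+1)+7(k-1)+(4i+k-6).
\]
If $k\ge2$ then $4i+k\ge6$, so the bound \eqref{E068} with $j$ replaced by $j+1$ follows. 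If $k=1$ the only contributing index is $i=1$ and $x_{2j+1,1}=x_{2j,1}\,m_{4,2}=x_{2j,1}$ since $m_{4,2}=1$, so $\vartheta_2(x_{2j+1,1})=\vartheta_2(x_{2j,1})=3(j+1)$ by the inductive equality, giving at once the required bound and its sharpness. This would close the induction.

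\emph{Expected obstacle.} Beyond Lemma~\ref{L4} itself, the work is bookkeeping: correctly identifying the finite index ranges on which the $m$-weights are nonzero (so that Lemma~\ref{L4} applies verbatim), confirming that the residual linear quantities $3(i-1)$ and $4i+k-6$ are nonnegative there — with the single exception $k=i=1$ in the even-to-odd step, which must be handled directly — and, for the equality statements, checking that at $k=1$ the minimal-valuation summand is unique; this last point is exactly where the small explicit values $m_{3,2}=2^3\cdot3$, $m_{6,3}=1$ and $m_{4,2}=1$ enter.
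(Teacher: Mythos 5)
Your proposal is correct and follows essentially the same route as the paper's proof: the same alternating induction between \eqref{E068} and \eqref{E069}, the same application of Lemma~\ref{L4} on the same contributing index ranges, and the same explicit entries $m_{3,2}=2^3\cdot 3$, $m_{6,3}=1$, $m_{4,2}=1$ to establish equality at $k=1$. The only (harmless) difference is that you take the base case to be \eqref{E068} at $j=1$ directly from the definition, whereas the paper reads off the cases $j=1,2$ from the displayed rows of the matrix in \eqref{31}.
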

\begin{proof}
We use mathematical induction to prove these results, passing alternatively between \eqref{E068} and \eqref{E069}. From \eqref{31} it follows that \eqref{E068} and \eqref{E069} hold for $j=1,2$ and equality hold in each case for $k=1$.

Assume that \eqref{E068} is true for some positive integer $j$. Consider the case $k=1$ of \eqref{E069}.
\begin{align}
\nonumber x_{2j,\,1} &= \sum\limits_{i=1}^{\infty} x_{2j-1,\,i}\ m_{3i,\,i+1} = x_{2j-1,\,1}\ m_{3,2} + x_{2j-1,\,2}m_{6,\,3}\\
&=3\cdot 2^3 x_{2j-1,\,1} + x_{2j-1,\,2}. \label{E073}
\end{align}
By our assumption, $\vartheta_2(x_{2j-1,\,1})=3j$ and $\vartheta_2(x_{2j-1,\,2})\geq 3j+7$. Hence it follows that $\vartheta_2(x_{2j,\,1})=3j+3$. For $k\geq 2$, using the definition of $x_{\alpha, \,j}$ and properties \eqref{MR5} and \eqref{MR6} of the numbers $m_{j,\,k}$, we get 
\begin{equation}
\vartheta_2(x_{2j,\,k}) = \vartheta_2\lb(\sum\limits_{\frac{k}{2}\leq i\leq 2k} x_{2j-1,\,i}\ m_{3i,\,i+k}\rb)\geq \min\limits_{\frac{k}{2}\leq i\leq 2k} \vartheta_2(x_{2j-1,\,i}\ m_{3i,\,i+k}). \label{E075}
\end{equation}
By Lemma \ref{L4}, $\vartheta_2(m_{3i,\,i+k})\geq 4(2k-i)-1$, whenever $i \leq 2k$. Using this fact and the induction hypothesis,
\begin{align}
\nonumber \min\limits_{\frac{k}{2}\leq i\leq 2k} \vartheta_2(x_{2j-1,\,i}\  m_{3i,\,i+k}) &\geq \min\limits_{\frac{k}{2}\leq i\leq 2k} 3j+7(i-1) + 4(2k-i)-1\\ \nonumber  &= \min\limits_{\frac{k}{2}\leq i\leq 2k} 3j+3i+8(k-1)\\ &\geq 3j+3+8(k-1).\label{E076}
\end{align}
Combining \eqref{E075} and \eqref{E076}, we obtain
\begin{equation}
\vartheta_2(x_{2j,\,k})  \geq 3(j+1)+8(k-1). \label{E078}
\end{equation}
From \eqref{E078}, we conclude that if \eqref{E068} is true for some positive integer $j$ then \eqref{E069} is also true for that $j$. 

Suppose \eqref{E069} is true for some positive integer $j$. Consider
\begin{equation}
\nonumber x_{2j+1,\,1} = \sum\limits_{i=1}^{\infty} x_{2j,\,i}\ m_{3i+1,\,i+1} = x_{2j,\,1}\ m_{4,\,2} = x_{2j,\,1}. \label{E079}
\end{equation}
By our assumption, $\vartheta_2(x_{2j,1})=3(j+1)$ and hence $\vartheta_2(x_{2j+1,1})=3(j+1)$. Let $k\geq 2$, by the definition of $x_{\alpha,\,j}$ and properties \eqref{MR5} and \eqref{MR6}, we see that 
\begin{align*}
\vartheta_2(x_{2j+1,\,k}) & = \vartheta_2\lb(\sum\limits_{\frac{k-1}{2}\leq i\leq 2k-1} x_{2j,\,i}\ m_{3i+1,\,i+k}\rb)\\ &\geq \min\limits_{\frac{k-1}{2}\leq i\leq 2k-1} \vartheta_2(x_{2j,\,i}\ m_{3i+1,\,i+k})\\
 &\geq \min\limits_{\frac{k-1}{2}\leq i\leq 2k-1} 3(j+1)+8(i-1) + 4(2k-i-1)-1\\   
 &= \min\limits_{\frac{k-1}{2}\leq i\leq 2k-1} 3(j+1)+7(k-1)+(k+4i-6)\\ &\geq 3(j+1)+7(k-1).\label{E082}
\end{align*}
which is \eqref{E068} with $j$ replaced by $j+1$. Here second inequality of the last expression follows from the Lemma \ref{L4}.
\end{proof}

\begin{proof}[Proof of Theorem \ref{T4}]
Congruence \eqref{E094} follows from \eqref{E020}. By the binomial theorem, we can see that for all positive integers $k$ and $m$
\begin{equation}\label{E103}
f_k^{2^m} \equiv f_{2k}^{2^{m-1}} \pmod{2^{m}}. 
\end{equation}
In view of \eqref{E068} and \eqref{E103}, we see that
\begin{equation}\label{E10}
x_{2\alpha-1, \,k}\equiv 0 \pmod{2^{3\alpha+7(k-1)}}
\end{equation}
and 
\begin{equation}\label{E9}
\frac{f_2^{16}}{f_1^{24}}\equiv f_2^4 \pmod{2^3}
\end{equation}
It follows from \eqref{E004}, \eqref{E10} and \eqref{E9} that, 
\begin{equation*}
\sum\limits_{n=0}^{\infty} p_8\lb(2^{2\alpha-1}n + \frac{2^{2\alpha-1}+1}{3} \rb)q^{n}
\equiv  x_{2\alpha-1,\,1} \, f_2^4 \pmod{2^{3\alpha+3}},
\end{equation*}
which implies that
\begin{equation}\label{E13}
\sum\limits_{n=0}^{\infty} p_8\lb(2^{2\alpha}n + \frac{2^{2\alpha+1}+1}{3} \rb)q^{n}\equiv 0 \pmod{2^{3\alpha+3}}
\end{equation}
and
\begin{equation}\label{E14}
\sum\limits_{n=0}^{\infty} p_8\lb(2^{2\alpha}n + \frac{2^{2\alpha-1}+1}{3} \rb)q^{n} \equiv x_{2\alpha-1, \,1} \,f_1^4 \pmod{2^{3\alpha+3}}. 
\end{equation}
Congruence \eqref{E095} follows from \eqref{E13}. By substituting \eqref{E6} in \eqref{E14} and extracting the terms involving odd and even powers of $q$, 
\begin{align}
\nonumber \sum\limits_{n=0}^{\infty} p_8\lb(2^{2\alpha+1}n + \frac{7\cdot 2^{2\alpha-1}+1}{3} \rb)q^{n} & \equiv -4 x_{2\alpha-1,\,1}\, \frac{f_1^2f_{4}^4}{f_2^2} \\ & \equiv -4 x_{2\alpha-1,\,1}\, \frac{f_{4}^4}{f_2} \pmod{2^{3\alpha+3}} \label{E106}
\end{align} and
\begin{equation}
 \sum\limits_{n=0}^{\infty} p_8\lb(2^{2\alpha+1}n + \frac{2^{2\alpha-1}+1}{3} \rb)q^{n}  \equiv  x_{2\alpha-1,\,1} \, \frac{f_2^{10}}{f_1^2f_4^4} \equiv  x_{2\alpha-1,\,1}\, f_2 \pmod{2^{3\alpha+1}}. \label{E107}
\end{equation} 
Congruences \eqref{E096} and \eqref{E099} follow from \eqref{E106}. Congruences \eqref{E098} and \eqref{E102} follow from \eqref{E107}. 

From \eqref{E069}, we have
\begin{equation}\label{E15}
x_{2\alpha, \,k}\equiv 0 \pmod{2^{3\alpha+3+8(k-1)}}
\end{equation}
In view of \eqref{E005} and \eqref{E15},
\begin{equation}\label{E109}
\sum\limits_{n=0}^{\infty} p_8\lb(2^{2\alpha}n + \frac{2^{2\alpha+1}+1}{3} \rb)q^{n}
\equiv x_{2\alpha,1}\,  \frac{f_2^{24}}{f_1^{32}}  \pmod{2^{3\alpha+11}}. 
\end{equation}
Substituting \eqref{E7} in \eqref{E109} and extracting the terms involving odd powers of $q$, we see that
\begin{align}
\nonumber \sum\limits_{n=0}^{\infty} p_8\lb(2^{2\alpha+1}n + \frac{5\cdot 2^{2\alpha}+1}{3} \rb)q^{n}
& \equiv 2^5x_{2\alpha,1}\, \frac{f_2^{100}}{f_1^{84}f_4^{24}}\\ 
& \equiv 2^5x_{2\alpha,1}\, f_2^{10} \pmod{2^{3\alpha+10}}, \label{E110}
\end{align} which implies that
\begin{equation}
\sum\limits_{n=0}^{\infty} p_8\lb(2^{2\alpha+1}n + \frac{5\cdot 2^{2\alpha}+1}{3} \rb)q^{n}
 \equiv 2^5x_{2\alpha,1}\, f_4^{5} \pmod{2^{3\alpha+9}}. \label{E111}
\end{equation}
Congruences \eqref{E097} and \eqref{E100} follow from \eqref{E110}. Congruence \eqref{E101} follows from \eqref{E111}. 
\end{proof}

\end{document}